\newtheorem{thm}{Theorem}[section] 
\newtheorem{cor}[thm]{Corollary}
\newtheorem{lem}[thm]{Lemma}
\def\[{\left[}
\def\]{\right]}
\def\Ker{{\operatorname{Ker}}}
\def\gr{{\operatorname{gr}}}
\def\gr{{\operatorname{gr}}}
\begin{document}

\title{Affine Noetherian algebras, filtrations and presentations}

\begin{abstract} 
Resco and Small gave the first example of an affine Noetherian algebra which is not finitely presented. It is shown that their algebra has no finite-dimensional filtrations whose associated graded algebras are Noetherian, affirming their prediction. A modification of their example yields countable fields over which `almost all' (that is, a co-countable continuum of) affine Noetherian algebras lack such a filtration, and an answer to a question suggested by Irving and Small is derived.
\end{abstract}

\author{Be'eri Greenfeld}

\address{Department of Mathematics, University of California, San Diego, La Jolla, CA, 92093, USA}
\email{bgreenfeld@ucsd.edu}
\keywords{Noetherian rings, stably Noetherian rings, filtered algebras, graded algebras, affine algebras}

\subjclass[2020]{16P40}

\dedicatory{Dedicated to the memory of my erudite teacher A. S. Dahari}

\maketitle

\section{Introduction}

Let $F$ be a field and let $R$ be an $F$-algebra with a finite-dimensional filtration $F\subseteq \mathcal{F}_0\subseteq \mathcal{F}_1\subseteq \cdots$, that is, for all $i,j$ we have $\dim_F \mathcal{F}_i < \infty$  and $\mathcal{F}_i\mathcal{F}_j\subseteq \mathcal{F}_{i+j}$. We always assume that our filtrations are exhaustive, namely, $R=\bigcup_{i=0}^{\infty} \mathcal{F}_i$. It is well known that if the associated graded algebra:
$$ \gr_\mathcal{F} (R) = \mathcal{F}_0 + \frac{\mathcal{F}_1}{\mathcal{F}_0} + \frac{\mathcal{F}_2}{\mathcal{F}_1} + \cdots $$ is Noetherian then $R$ is Noetherian as well, but the converse is false. It was asked by McConnell and Robson in 1987 whether every affine Noetherian algebra admits \textit{any} finite-dimensional filtration with respect to which the associated graded algebra is Noetherian (aka `Noetherian filtration'), see \cite[8.3.10]{MR0}. The first counterexample to this question was given by Stephenson and Zhang \cite{SteZh}; they proved that graded Noetherian algebras have subexponential growth, and hence any affine Noetherian algebra of exponential growth forms a counterexample to the aforementioned question. Later, counterexamples were found among PI-algebras \cite{StaZh}, which are thus of polynomially bounded growth.

In 1992, Resco and Small \cite{RescoSmall} gave the first example of an affine Noetherian algebra which does not remain Noetherian under a suitable scalar extension; interestingly, over an uncountable algebraically closed field such examples do not exist \cite{Bell} (for more on the importance of Noetherian stability under scalar extensions, see \cite{ASZ}; there are plenty of non-affine examples, for instance, if $K/F$ is a non-finitely generated field extension then $K\otimes_F K$ is not Noetherian \cite{Vamos}).  Resco and Small also proved that their example is non-finitely presented. To the best of our knowledge, the only known examples of affine Noetherian algebras having these two unexpected properties are of the kind of the Resco-Small algebras.
Motivated by the McConnell-Robson problem, Resco and Small computed the associated graded algebra of their ring for various filtrations and asked if it admits a Noetherian filtration. We remark that their algebra has subexponential (in fact, intermediate) growth, so the Stephenson-Zhang argument does not apply.

\begin{thm} \label{noeth_fp}
Let $R$ be an algebra and let $\mathcal{F}$ be a finite-dimensional filtration. If $\gr_\mathcal{F}(R)$ is Noetherian then $R$ is finitely presented.
\end{thm}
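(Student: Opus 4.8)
The plan is to reduce everything to a statement about the graded algebra $A := \gr_{\mathcal F}(R)$ and then to prove that a Noetherian graded algebra with finite-dimensional homogeneous components is finitely presented. First I would set up the standard filtered-lifting machinery: choose homogeneous algebra generators of $A$ (finitely many exist, as explained below), lift each to an element of the corresponding filtration layer of $R$, and let $\pi\colon T=F\langle x_1,\dots,x_k\rangle \twoheadrightarrow R$ be the resulting presentation, giving $x_i$ the weight equal to the degree of the generator it lifts. This weighting makes $T$ a filtered algebra with finite-dimensional layers $\mathcal A_n$, and by construction $\gr(\pi)$ is onto in every degree, so a short induction shows $\pi$ is strict, i.e. $\pi(\mathcal A_n)=\mathcal F_n$. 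Strictness makes $0\to \gr(I)\to \gr(T)\to A\to 0$ exact, where $I=\ker\pi$ and $\gr(T)$ is the graded free algebra; hence $\gr(I)$ is precisely the relation ideal of the induced presentation of $A$. A routine degree-induction (the filtered lifting lemma) then shows that if $\gr(I)$ is finitely generated as a two-sided ideal, so is $I$: given $f\in I$ of filtered degree $d$, express its leading form as a two-sided combination of fixed homogeneous generators of $\gr(I)$, subtract the corresponding lift, and recurse on the strictly smaller degree. Thus it suffices to prove that $A$ is finitely presented.

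Now I would first note that $A$ is affine: being $\mathbb N$-graded and left Noetherian with $\dim_F A_0<\infty$, its augmentation ideal $A_+$ is finitely generated as a left ideal by homogeneous elements $y_1,\dots,y_m$ of positive degree, and a one-line induction on degree shows that $A$ is generated as an $F$-algebra by $A_0$ together with the $y_j$. The heart of the argument, and the only place Noetherianity is essentially used, is to bound the relations of $A$ homologically. Writing $A=T/J$ with these generators, the minimal number of homogeneous two-sided generators of $J$ in each degree is computed by $\dim_F \mathrm{Tor}^A_2(A_0,A_0)$, via the minimal graded free resolution $\cdots\to P_2\to P_1\to P_0\to A_0\to 0$ of the trivial module $A_0=A/A_+$. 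Since $A$ is affine, $P_0$ and $P_1$ are finitely generated free left $A$-modules, so the first syzygy $\Omega=\ker(P_1\to P_0)$ is a submodule of the finitely generated module $P_1$; left-Noetherianity of $A$ forces $\Omega$ to be finitely generated, and graded Nakayama (using $\dim_F A_0<\infty$) then makes the minimal generators of $\Omega$ span a finite-dimensional space. Hence $\mathrm{Tor}^A_2(A_0,A_0)$ is finite-dimensional, $J$ needs only finitely many two-sided generators, and $A$ is finitely presented; combined with the lifting step this yields finite presentation of $R$.

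The main obstacle I anticipate is twofold. One must justify the identification of the degree-$n$ part of $\mathrm{Tor}^A_2(A_0,A_0)$ with the minimal two-sided relations of $A$ (the ``$\mathrm{Tor}_1=$ generators, $\mathrm{Tor}_2=$ relations'' principle for graded algebras, cleanest when $A_0=F$), and one must handle the non-connected case $A_0\neq F$ carefully, working with resolutions relative to $A_0$ and invoking that the finite-dimensional algebra $A_0$ is itself finitely presented over $F$. Both are manageable precisely because $\dim_F \mathcal F_0<\infty$, and in every version the decisive input is the same elementary fact: a submodule of a finitely generated module over the Noetherian ring $A$ is again finitely generated. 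I would also remark that, taking the filtration $\mathcal F_n=\bigoplus_{i\le n}A_i$ on a graded algebra, this argument specializes to the statement that a Noetherian graded algebra with finite-dimensional components is finitely presented, which is the genuine core of the theorem.
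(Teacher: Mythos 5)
Your overall architecture coincides with the paper's two-step reduction. Your first step (the strict surjection $\pi$ from a weighted free algebra, the identification of $\gr(I)$ with the relation ideal of $\gr_{\mathcal F}(R)$, and the leading-term recursion showing that finite generation of $\gr(I)$ forces finite generation of $I$) is exactly the paper's Lemma~\ref{fp}, with your recursion replacing the paper's finishing move via injectivity of $J\mapsto \gr_{\mathcal G}(J)$ on chains of ideals; both are standard and correct. One small slip: if some generator has degree $0$ (unavoidable, since the $a_i$ must generate $A_0$), the weighted free algebra does \emph{not} have finite-dimensional layers --- words in weight-$0$ letters are unbounded in number --- but nothing in your recursion actually uses that finiteness, so this is cosmetic. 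Where you genuinely diverge is the graded core. The paper quotes Lewin's theorem for connected gradings (Theorem~\ref{Lewin's}), whereas you give a homological proof: $A_+$ finitely generated as a left ideal yields affineness, the first syzygy $\Omega\subseteq P_1$ is finitely generated by Noetherianity, so $\mathrm{Tor}_2^A(F,F)\cong \Omega/A_+\Omega$ is finite-dimensional, and the classical identification of $\mathrm{Tor}_2$ with $J/(T_+J+JT_+)$ plus degreewise Nakayama bounds the defining relations. For connected $A$ (that is, $A_0=F$) this is complete and arguably more self-contained than citing \cite{Lewin}.

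The genuine gap is the non-connected case $A_0=\mathcal F_0\neq F$, which the theorem cannot avoid since $\mathcal F_0$ is an arbitrary finite-dimensional algebra. As literally written, your mechanism fails there: minimal graded free resolutions of the left module $A_0$ need not exist when $A_0$ is not semisimple --- covers are minimal only modulo the graded radical $\mathrm{rad}(A_0)+A_{\geq 1}$, not modulo $A_+$ alone, so tensoring your resolution down no longer kills the differentials and $\dim_F \mathrm{Tor}_2^A(A_0,A_0)$ no longer counts two-sided relations. You flag this yourself (``cleanest when $A_0=F$''), but the proposed repair via resolutions relative to $A_0$ is asserted rather than constructed, and setting it up is real work: one needs relative projectives of the form $A\otimes_{A_0}V$, an identification of relative $\mathrm{Tor}_2$ with $J/(T_+J+JT_+)$ for a presentation over the tensor ring $T_{A_0}(V)$, and a descent from finite presentation as an $A_0$-ring to finite presentation over $F$. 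The paper sidesteps all of this by a device absent from your proposal: pass to the connected subalgebra $T=F+A_1+A_2+\cdots$, invoke Stafford's lemma \cite[Lemma~1.4]{Stafford} that a finite-codimensional subalgebra of a Noetherian algebra is again Noetherian, apply the connected case to $T$, and then realize $A$ as a homomorphic image of a finitely presented Noetherian ring built from a finite $T$-module generating set (Theorem~\ref{extended}). Until you either carry out the relative-homological version in detail or import Stafford's lemma and this reduction, your argument proves the theorem only when the associated graded algebra is connected.
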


As a consequence, we affirm the suspicion of Resco and Small and answer their question:

\begin{cor} \label{rescosmall}
The Resco-Small algebra has no finite-dimensional filtration whose associated graded algebra is Noetherian.
\end{cor}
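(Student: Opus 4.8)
The plan is to obtain Corollary \ref{rescosmall} as a direct contrapositive of Theorem \ref{noeth_fp}, feeding in the one structural property of the Resco-Small algebra that does the work. First I would recall from \cite{RescoSmall} the two facts already cited in the introduction: their algebra $R$ is affine Noetherian, and it is \emph{not} finitely presented. The latter is precisely the hypothesis I need to play against Theorem \ref{noeth_fp}.

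Next I would argue by contradiction. Suppose $R$ carried some finite-dimensional filtration $\mathcal{F}$ with $\gr_\mathcal{F}(R)$ Noetherian. Then the hypotheses of Theorem \ref{noeth_fp} are met verbatim, and the theorem yields that $R$ is finitely presented. This contradicts the Resco-Small result that $R$ is non-finitely presented, so no finite-dimensional filtration with Noetherian associated graded algebra can exist.

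I do not expect a genuine obstacle here: all the difficulty has been absorbed into Theorem \ref{noeth_fp}, and the corollary is a one-step deduction. The only point worth flagging is why that theorem is the right tool. As noted in the introduction, the Resco-Small algebra has intermediate growth, so the Stephenson-Zhang criterion (that graded Noetherian algebras have subexponential growth) gives no leverage whatsoever. Theorem \ref{noeth_fp} instead extracts \emph{finite presentation} from a Noetherian associated graded algebra, and it is exactly the failure of finite presentation, rather than any growth obstruction, that forbids a Noetherian filtration and thereby settles the question of Resco and Small.
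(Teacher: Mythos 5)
Your proposal is correct and matches the paper's proof exactly: the paper deduces Corollary \ref{rescosmall} in one line from Theorem \ref{noeth_fp} together with Resco and Small's result that their algebra is not finitely presented, which is precisely your contrapositive argument. Nothing is missing.
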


In 1986, Irving and Small \cite{IrvingSmall} investigated necessary conditions for representability of PI-algebras, and as a consequence of their work they proved that over a countable field there are only countably many isomorphism classes of affine Noetherian PI-algebras (in retrospect, this can be deduced also from Anan'in's representability result \cite{Ananin} or from the fact that affine Noetherian PI-algebras are finitely presented \cite[Theorem~10.1.2]{BKR}, both proven later). They suggested that the following question is reasonable: over a countable field, are there only countably many isomorphism classes of affine Noetherian (not necessarily PI) algebras?
Using a further modification of the Resco-Small argument, we resolve this question in the negative:

\begin{thm} \label{main_1}
There exists a countable field $F$ and a continuum of pairwise non-isomorphic affine Noetherian $F$-algebras.
\end{thm}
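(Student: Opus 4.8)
The plan is to produce the continuum by spreading the Resco--Small construction into a parameterized family $\{A_\omega\}$ indexed by a set of size $2^{\aleph_0}$, all living over one fixed countable field $F$. Recall that the essential mechanism of the Resco--Small algebra is an Ore/skew-Laurent extension $R = B[x^{\pm 1};\sigma]$ in which a single skew generator acts by an automorphism $\sigma$ whose orbit of one coefficient, $x^{i}t_{0}x^{-i}=\sigma^{i}(t_{0})=t_{i}$, recovers an infinitely-generated coefficient ring $B$. This is exactly what makes $R$ \emph{affine} over $F$ (it is generated by $t_{0},x,x^{-1}$ even though $B$ is not finitely generated) while defeating finite presentation, and the crossed-product structure $B * \mathbb{Z}$ over a suitable Noetherian-compatible $B$ is what keeps $R$ Noetherian. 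One such ring gives a single affine Noetherian (non-PI) domain; the task is to replace it by a continuum.

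First I would encode a parameter $\omega\in\{0,1\}^{\mathbb{N}}$ into the isomorphism type. For each $\omega$ I would deform the coefficient data or the action $\sigma_\omega$ so that the value $\omega_i$ is reflected in the algebraic relations \emph{along the $\sigma$-orbit} of the distinguished generator---for instance by prescribing, exactly at those coordinates $i$ with $\omega_i=1$, a specific algebraic dependence tying $\sigma^{i}(t_0)$ to earlier orbit elements, and leaving a free (transcendental) step where $\omega_i=0$. Two constraints must hold \emph{uniformly in} $\omega$: affineness (conjugation by $x$ must still generate the coefficient ring from a fixed finite set) and Noetherianity (the crossed-product-over-$\mathbb{Z}$ structure must be preserved by the deformation). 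This is the delicate design problem: the encoding must be rich enough to separate a continuum of cases yet tame enough that every $A_\omega$ remains an affine Noetherian domain.

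Next I would prove \textbf{rigidity}: that an $F$-algebra isomorphism $A_\omega\cong A_{\omega'}$ forces $\omega=\omega'$ (or at worst identifies $\omega$ with its reversal, which merges parameters only in pairs and so preserves cardinality). The mechanism is that such an isomorphism must preserve the intrinsic structure of a skew-Laurent domain: the unit group, hence the normal element $x$ up to a scalar and an inversion $x\mapsto c\,x^{\pm1}$; the degree-zero coefficient part $B_\omega$, recovered intrinsically via the $x$-degree filtration or as a distinguished commutative subalgebra; and therefore the automorphism $\sigma_\omega$ up to conjugacy and inversion. Since $\omega$ was read off from $\sigma_\omega$-invariant algebraic data along the orbit, it is transported to the corresponding data for $\omega'$, forcing agreement; to kill even the reversal ambiguity one can build an asymmetry into the coefficient ring so that $\sigma_\omega$ (not $\sigma_\omega^{-1}$) is the one extending $B_\omega$, pinning $x$ down up to a scalar. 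I expect this rigidity step to be the main obstacle: extracting enough isomorphism-invariant structure (units, normal elements, the coefficient ring, the shift up to conjugacy) from the bare $F$-algebra to reconstruct $\omega$, while simultaneously checking that the $\omega$-dependent relations create no accidental extra isomorphisms.

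Finally I would count. The index set $\{0,1\}^{\mathbb{N}}$ has cardinality $2^{\aleph_0}$, and rigidity shows each isomorphism class of $A_\omega$ contains only finitely many (indeed at most two) parameters, so $\{A_\omega\}$ contains a continuum of pairwise non-isomorphic affine Noetherian $F$-algebras, proving the theorem. In particular this resolves the Irving--Small question in the negative, since by \cite{IrvingSmall} a continuum of non-isomorphic affine Noetherian algebras over a countable field cannot all be PI; the extra non-commutative freedom exploited in the Resco--Small mechanism is precisely what escapes the countability bound valid in the PI case.
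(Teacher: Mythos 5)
Your plan defers rather than executes the two steps that constitute the entire content of the theorem: you name the construction (``uniformly affine and Noetherian in $\omega$'') and the rigidity statement as the ``delicate design problem'' and the ``main obstacle,'' but supply neither. Worse, the concrete mechanism you recall is not the Resco--Small mechanism, and the discrepancy is fatal to your setup. The Resco--Small algebra is not a skew-Laurent extension $B[x^{\pm 1};\sigma]$ with an automorphism shifting $\sigma^i(t_0)=t_i$; it is a differential polynomial ring $K[x;\delta]$ over $K=k(t_0,t_1,\dots)$ in characteristic $p$, with $\delta(t_i)=t_{i+1}$. The characteristic-$p$ derivation is what makes everything work: $\delta(t_k^p)=p\,t_k^{p-1}\delta(t_k)=0$, so $F=K^p$ is a \emph{central} countable subfield, and since $t_i^p\in F$ each $t_i$ is algebraic over $F$, whence $K=F[t_0,t_1,\dots]$ and $R=F\langle t_0,x\rangle$ is affine. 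In your automorphism version none of this survives: $\sigma(t_k^p)=t_{k+1}^p$, so $B^p$ is neither central nor even fixed, and over any countable central subfield the subalgebra of $B[x^{\pm1};\sigma]$ generated by $t_0,x^{\pm1}$ contains no inverses of the $t_i$'s --- so your $A_\omega$ is a proper subalgebra of the skew-Laurent ring and is not the affine Noetherian object you need; trading the field $B$ for a polynomial coefficient ring to rescue affineness destroys the easy Noetherianity. Your encoding device (imposing algebraic dependencies along the orbit exactly where $\omega_i=1$) is also problematic as stated, since the relations must be stable under $\sigma$ (or $\delta$) for the extension to be well defined, and you give no argument that such quotients remain Noetherian domains.

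For comparison, the paper threads all of these needles with a single small twist of the original example: perturb the derivation, $\delta_\lambda(t_k)=t_{k+1}+\lambda_k t_0$ with $\lambda\in\mathbb{F}_p^{\infty}$, which changes neither the coefficient field $K$ (so $R_\lambda=K[x;\delta_\lambda]$ is Noetherian for every $\lambda$ by the standard skew polynomial theorem) nor the affineness argument, since $t_{k+1}=xt_k-t_kx-\lambda_k t_0$. Rigidity is then elementary and does not route through units or normal elements at all: an $F$-algebra isomorphism $\phi$ fixes the central elements $t_k^p$, uniqueness of $p$-th roots in $R_\lambda$ forces $\phi$ to fix $K$ pointwise, a leading-term computation forces $\phi(x)=c_0+c_1x$, and the commutator identity $[\phi(x),t_k]=t_{k+1}+\lambda_k t_0$ yields $c_1=(t_{k+1}+\lambda_k t_0)/(t_{k+1}+\lambda'_k t_0)$; since $c_1$ lies in some $\mathbb{F}_p(t_0,\dots,t_k)$, differentiating with respect to $t_{k+1}$ gives $\lambda_k=\lambda'_k$ for all $k\gg 1$. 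Note that this yields rigidity only up to almost-everywhere agreement of the sequences --- not the exact or reversal-only rigidity your outline asserts --- and that is precisely enough, because $\mathbb{F}_p^{\infty}$ modulo eventual agreement still has $2^{\aleph_0}$ classes. Your proposed invariants would not obviously do the job even in your own setting (the units of $B[x^{\pm1};\sigma]$ with $B$ a field form a huge group $\{bx^n : b\in B^\times\}$, and the ``degree-zero part'' is not a priori an isomorphism invariant), whereas the central $p$-th powers give the paper canonical, isomorphism-invariant anchors for free. Finally, your closing appeal to Irving--Small inverts the logic: the theorem \emph{answers} their question; nothing in its proof should cite their PI countability result.
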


Combined with Theorem \ref{noeth_fp}, we get that over certain fields, `almost all' affine Noetherian algebras do not afford a finite-dimensional Noetherian filtration:

\begin{cor} \label{almost}
There exists a countable field over which there is a continuum of non-isomorphic affine Noetherian algebras without Noetherian filtrations, and only countably many non-isomorphic affine algebras which admit a Noetherian filtration.
\end{cor}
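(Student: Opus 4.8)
The plan is to combine the continuum of examples supplied by Theorem \ref{main_1} with the finite-presentation constraint of Theorem \ref{noeth_fp}, mediated by a crude cardinality count. Let $F$ be the countable field produced by Theorem \ref{main_1}, and let $\{R_\lambda\}_{\lambda\in\Lambda}$ be the associated continuum of pairwise non-isomorphic affine Noetherian $F$-algebras, so $|\Lambda|=2^{\aleph_0}$.

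First I would record the elementary counting lemma: over a countable field $F$ there are only countably many isomorphism classes of finitely presented $F$-algebras. Indeed, for each $n$ the free algebra $F\langle x_1,\dots,x_n\rangle$ is a countable set (finitely supported $F$-combinations of the countably many monomials), hence has only countably many finite subsets; each such subset generates a two-sided ideal and so determines a finitely presented quotient. Taking the countable union over $n$ gives countably many finitely presented $F$-algebras up to choice of presentation, and passing to isomorphism classes can only decrease this count.

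Combining this lemma with Theorem \ref{noeth_fp} yields the second assertion directly: every affine $F$-algebra admitting a finite-dimensional Noetherian filtration is finitely presented, and there are only countably many such algebras up to isomorphism. For the first assertion I would apply the same implication \emph{inside} the family $\{R_\lambda\}$: by the counting lemma at most countably many of the $R_\lambda$ are finitely presented, so by Theorem \ref{noeth_fp} at most countably many of them carry a Noetherian filtration. Discarding this countable subfamily leaves a co-countable, hence still continuum-sized, collection of affine Noetherian $F$-algebras, none of which admits a Noetherian filtration.

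The argument has no genuine obstacle — the substantive content is entirely absorbed into Theorems \ref{noeth_fp} and \ref{main_1} — so the only point demanding care is bookkeeping: one must use a \emph{single} countable field for which Theorem \ref{main_1} already furnishes a continuum of examples, and then invoke the fact that a continuum minus a countable set is again a continuum, so that the surviving family retains full cardinality while the global count of filtration-admitting algebras stays countable.
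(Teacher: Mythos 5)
Your proposal is correct and follows essentially the same route as the paper: the paper's proof likewise combines Theorem \ref{main_1} with the finite-presentation consequence of a Noetherian filtration and the standard count that a countable field admits only countably many isomorphism classes of finitely presented algebras. Your write-up merely spells out the counting lemma and the discard-a-countable-subfamily step that the paper leaves implicit, so nothing further is needed.
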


This gives (another) resounding negative answer to McConnell-Robson's question on the existence of Noetherian filtrations.

\section{Graded algebras}

We start with a slight generalization of a theorem of Lewin \cite[Theorem~17]{Lewin}. Lewin proved that if $U\triangleleft F\left<x_1,\dots,x_r\right>$ is an ideal of a free algebra which is homogeneous with respect to the standard grading and $F\left<x_1,\dots,x_r\right>/U$ is Noetherian then $U$ is finitely generated as a two-sided ideal. Lewin's proof works mutatis-mutandis for any grading of the free algebra, as long as it remains connected (that is, $S_0=F$):

\begin{thm}[{Lewin's Theorem for non-standard connected gradings}] \label{Lewin's} 
Let $S=F+S_1+\cdots$ be a finitely generated connected graded $F$-algebra and $\dim_F S_i<\infty$ for all $i\geq 1$. If $S$ is Noetherian then it is finitely presented.
\end{thm}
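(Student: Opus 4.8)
The plan is to follow Lewin's strategy in the language of minimal graded resolutions, the point being that none of it depends on the grading being the standard one. Since $S$ is connected and finitely generated, I would pick finitely many homogeneous algebra generators and form the graded surjection $\pi\colon A=F\left<x_1,\dots,x_r\right>\to S$, where $A$ carries the grading in which $\deg x_i$ equals the degree $d_i$ of the corresponding generator of $S$. Because each generator lies in positive degree, $A$ is again connected with $\dim_F A_n<\infty$ for every $n$, and $U=\Ker\pi$ is a homogeneous two-sided ideal. Finite presentability of $S$ is exactly finite generation of $U$ as a two-sided ideal, so the theorem reduces to showing that $U$ needs only finitely many homogeneous generators.

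Next I would translate generation of $U$ into homological data of $S$. For a connected graded algebra the minimal homogeneous generators of the relation ideal form a basis of $U/(A_+U+UA_+)$, and this space is naturally identified with $\operatorname{Tor}_2^S(F,F)$ (using that the free algebra $A$ satisfies $\operatorname{Tor}^A_{\geq 2}(F,F)=0$). Thus it suffices to prove $\dim_F\operatorname{Tor}_2^S(F,F)<\infty$. To compute this I would build the beginning of a minimal graded free resolution of the trivial left module $F=S/S_+$: start with $P_0=S\xrightarrow{\varepsilon}F$, whose kernel is $S_+$; since the images $\bar{x}_i$ generate $S_+$ as a left ideal, there is a graded surjection $P_1=\bigoplus_{i=1}^r S(-d_i)\to S_+$ from a \emph{finitely generated} free left $S$-module.

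The decisive step is the use of the Noetherian hypothesis. Let $K=\Ker(P_1\to P_0)$, a graded left $S$-submodule of the finitely generated free module $P_1$. As $S$ is left Noetherian, $P_1$ is a Noetherian module, so $K$ is finitely generated; and any finitely generated graded module $M$ over a connected graded algebra has $\dim_F M/S_+M<\infty$ by a graded Nakayama argument (the images of finitely many generators span the quotient). Since the resolution was chosen minimally we have $\operatorname{Tor}_2^S(F,F)\cong K/S_+K$, and this is therefore finite-dimensional. Hence $U$ is finitely generated as a two-sided ideal and $S$ is finitely presented.

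The main point requiring care — and precisely the place where one must verify that Lewin's argument is insensitive to the grading — is the identification of the minimal two-sided generators of $U$ with $\operatorname{Tor}_2^S(F,F)$, together with the degreewise finiteness that makes ``finitely many generators'' meaningful. Both rely only on connectedness and $\dim_F A_n<\infty$, which hold here exactly because every algebra generator sits in positive degree; the non-standard weights $d_i$ enter only as the harmless shifts $S(-d_i)$ in the free modules. I would double-check that $S_+=\sum_i S\bar{x}_i$ as a left ideal, so that $P_1$ really surjects onto $S_+$, and that the graded Nakayama lemma applies with the given finite-dimensional grading; after these routine verifications the argument closes.
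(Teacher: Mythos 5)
Your argument is correct in substance, but it takes a genuinely different route from the paper, which gives no self-contained proof at all: it invokes Lewin's Theorem 17 and observes that Lewin's argument is insensitive to the choice of grading as long as the grading is connected and locally finite. Lewin's mechanism is more elementary than yours. Writing $S=A/U$ with $A=F\left<x_1,\dots,x_r\right>$ as you do, one notes that $U\subseteq A_+$ (connectedness), so $U\cdot U\subseteq A_+U$ and $U/A_+U$ is a right $S$-module; it sits inside $A_+/A_+U$, which is generated as a right $S$-module by the images of $x_1,\dots,x_r$, since every positive-degree monomial factors as $x_i\cdot(\text{tail})$. Noetherianity of $S$ then makes the submodule $U/A_+U$ finitely generated, and a graded-Nakayama induction on degree --- using connectedness and positivity of the grading exactly as you do --- shows that finitely many homogeneous elements generating $U$ modulo $A_+U$ already generate $U$ as a two-sided ideal. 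Your proof repackages the same engine (Noetherianity forces a syzygy module to be finitely generated, then graded Nakayama) homologically: the syzygy is $K=\Ker\bigl(\bigoplus_i S(-d_i)\to S_+\bigr)$ and the relation space is controlled by $\operatorname{Tor}_2^S(F,F)$, which embeds into $K/S_+K$ even without choosing $P_2$ minimally, via $\operatorname{Tor}_2^S(F,F)\cong\operatorname{Tor}_1^S(F,S_+)$. What your route buys is a presentation-independent invariant bounding the number of relations; what Lewin's buys is the avoidance of all homological and minimality bookkeeping. One point in your write-up does need tightening: the identification $U/(A_+U+UA_+)\cong\operatorname{Tor}_2^S(F,F)$ is valid only for a \emph{minimal} presentation, i.e.\ when $U\subseteq A_+^2$; for an arbitrary homogeneous generating set one has instead the exact sequence
$$ 0\rightarrow \operatorname{Tor}_2^S(F,F)\rightarrow U/(A_+U+UA_+)\rightarrow A_+/A_+^2\rightarrow S_+/S_+^2\rightarrow 0, $$
where exactness on the left uses $\operatorname{Tor}_2^A(F,F)=0$. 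This still gives $\dim_F U/(A_+U+UA_+)\leq \dim_F\operatorname{Tor}_2^S(F,F)+r<\infty$, so your conclusion stands; alternatively, first choose homogeneous generators of $S$ lifting a basis of the finite-dimensional graded space $S_+/S_+^2$, which makes the presentation minimal and your stated isomorphism literally true. Either repair closes the argument, so this is an imprecision rather than a gap.
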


In fact, we will see that one can omit the connectedness assumption:

\begin{thm}[{An extended version of Lewin's theorem}] \label{extended}
Let $S=S_0+S_1+\cdots$ be a finitely generated graded $F$-algebra and $\dim_F S_i<\infty$ for all $i\geq 0$. If $S$ is Noetherian then it is finitely presented.
\end{thm}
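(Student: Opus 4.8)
The plan is to run the argument behind Theorem \ref{Lewin's} directly over the finite-dimensional coefficient algebra $S_0$, rather than to reduce to the connected case. The natural device for such a reduction—homogenizing $S$ along a generating filtration so as to force the degree-zero part to be $F$—forces this only at the cost of passing to a Rees/homogenized algebra controlled by $\gr_{\mathcal F}(S)$, precisely the object that need not be Noetherian. So a genuine reduction to the connected statement is unavailable, and instead I would redo the homological bookkeeping with $S_0$ in the role previously played by $F$.

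First I would record that $S_0$, being a finite-dimensional $F$-algebra, is Artinian, hence semiperfect and itself finitely presented; its radical $J_0=J(S_0)$ is nilpotent and $\bar S_0:=S_0/J_0$ is semisimple. The graded Jacobson radical of $S$ is then $J=J_0\oplus S_+$, where $S_+=\bigoplus_{i\ge 1}S_i$, and $S/J\cong\bar S_0$ is finite-dimensional and semisimple. Because $S_0$ is semiperfect, finitely generated bounded-below graded $S$-modules admit graded projective covers, so graded Nakayama applies and minimal graded free resolutions over $S$ exist.

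Next I would present $S$ as a quotient $T\twoheadrightarrow S$ with kernel $U$, where $T$ is the free $S_0$-ring (tensor algebra over $S_0$) on finitely many homogeneous positive-degree generators lifting a set of algebra generators of $S$; finiteness of this generating set follows from left Noetherianity, since $S_+$ is then a finitely generated left ideal and hence $S_+/S_+^2$ is a finitely generated $S_0$-bimodule. Finite presentation of $S$ as an $F$-algebra will follow once $U$ is finitely generated as a two-sided ideal of $T$, together with the already-noted finite presentation of $S_0$. By the standard identification of the minimal algebra generators and minimal relations of an augmented graded algebra with $\operatorname{Tor}^S_1(\bar S_0,\bar S_0)$ and $\operatorname{Tor}^S_2(\bar S_0,\bar S_0)$ respectively (the tensor-algebra, or Anick-resolution, bookkeeping, now taken relative to $S_0$), it suffices to show these two $\operatorname{Tor}$-spaces are finite-dimensional.

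Finally I would extract this finiteness from the Noetherian hypothesis by the syzygy argument. Take a minimal graded free resolution $\cdots\to P_1\to P_0\to\bar S_0\to 0$. Here $P_0$ is finitely generated because $\bar S_0$ is finite-dimensional; as $S$ is left Noetherian, the first syzygy $\Omega^1\subseteq P_0$ is finitely generated, whence $P_1$ is finitely generated, whence $\Omega^2\subseteq P_1$ is finitely generated and $P_2$ is finitely generated. By minimality $\operatorname{Tor}^S_i(\bar S_0,\bar S_0)\cong\bar S_0\otimes_S P_i$, so $\operatorname{Tor}^S_1$ and $\operatorname{Tor}^S_2$ are finite-dimensional, giving finite presentation. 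I expect the main obstacle to lie in the two places where the passage from $F$ to the non-semisimple Artinian $S_0$ is not merely cosmetic: making the minimal-resolution and graded-Nakayama machinery legitimate over a semiperfect (rather than local or semisimple) degree-zero component, and carefully translating the two-sided statement that $U$ is a finitely generated ideal of $T$ into genuine finite presentation of $S$ over $F$—that is, folding in the finite presentation of $S_0$ itself and the passage between $S_0$- and $\bar S_0$-coefficients—so that the homological counts really do bound the number of $F$-algebra generators and relations.
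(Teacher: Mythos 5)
There is a genuine gap, and it sits exactly at the load-bearing step you yourself flag: the ``dictionary'' identifying minimal algebra generators with $\operatorname{Tor}^S_1(\bar S_0,\bar S_0)$ and minimal relations with $\operatorname{Tor}^S_2(\bar S_0,\bar S_0)$ is a theorem only when the degree-zero part is semisimple (equivalently, when $S$ is genuinely augmented over $\bar S_0$), and it does not survive the passage to an arbitrary finite-dimensional $S_0$. With $J=J(S_0)\oplus S_+$, already $\operatorname{Tor}^S_1(\bar S_0,\bar S_0)\cong J/J^2$ conflates the generators you want to count (images in $S_+/S_+^2$) with ideal generators of $J(S_0)$ inside $S_0$, which are not algebra generators over $S_0$ at all; worse, $\operatorname{Tor}^S_2(\bar S_0,\bar S_0)$ picks up syzygies caused by the failure of $\bar S_0$ to be projective over $S_0$ (for $S_0=F[\epsilon]/(\epsilon^2)$ the minimal resolution of $\bar S_0$ is already infinite for this reason), and these contributions have nothing to do with generators of your ideal $U\subseteq T_{S_0}(V)$, which is an $S_0$-\emph{relative} (bimodule) invariant. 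So the finite-dimensionality of $\operatorname{Tor}^S_2(\bar S_0,\bar S_0)$ --- which your syzygy argument does correctly establish from Noetherianity and minimality --- is not known, and not obviously able, to bound the minimal ideal generators of $U$. The homologically honest replacement is relative Tor/Hochschild theory over $S_0$, but reducing that to absolute one-sided Tor requires $\bar S_0$ separable, and over an imperfect field there need not even be a Wedderburn--Malcev splitting $\bar S_0\hookrightarrow S$, so the augmented framework you invoke is unavailable in exactly the generality the theorem claims (an arbitrary field $F$). Declaring this ``the main obstacle'' without resolving it leaves the proof incomplete at its crux; at best your route might be pushed through over perfect fields by lifting $\bar S_0$ and running the quiver-with-relations machinery.

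Your opening meta-claim --- that a reduction to the connected case is unavailable --- is also mistaken, and this is where the paper's proof is much simpler than your plan. The reduction is performed not by homogenization (you are right that a Rees-type construction would founder on a possibly non-Noetherian associated graded algebra) but by passing to the connected graded \emph{subalgebra} $T=F\oplus S_+\subseteq S$, which has finite codimension since $\dim_F S_0<\infty$. By \cite[Lemma~1.4]{Stafford} a finite-codimensional subalgebra of a Noetherian algebra is Noetherian; Lemma~\ref{gr_aff} shows $T$ is affine; Theorem~\ref{Lewin's} then makes $T$ finitely presented; and since $S$ is a finitely generated $T$-module, $S$ is a quotient of a finitely presented Noetherian algebra $T\left<e_1,\dots,e_p\right>/I$ (with $I$ encoding the finitely many module relations), hence finitely presented, because ideals of a Noetherian ring are finitely generated. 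This sidesteps every semiperfect, minimal-resolution, and separability issue on which your proposal currently founders.
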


The following seems to be well-known and definitely proven in various sources for commutative rings, but we give here a proof of this folklore for completeness.

\begin{lem} \label{gr_aff}
A non-negatively graded Noetherian $F$-algebra with finite-dimensional homogeneous components is affine over $F$. 
\end{lem}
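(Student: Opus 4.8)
The plan is to exhibit an explicit finite set of algebra generators: a vector-space basis of the degree-zero part together with homogeneous generators of the augmentation ideal. Write $S_+ = \bigoplus_{i\ge 1} S_i$ for the augmentation ideal, a homogeneous two-sided ideal with $S/S_+ \cong S_0$. Since $S_0$ is a single homogeneous component, it is finite-dimensional over $F$ by hypothesis, and thus affine; I fix an $F$-basis $e_1,\dots,e_k$ of $S_0$. This is the only place where finite-dimensionality is used, and it is essential, since a Noetherian $F$-algebra need not be affine.

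Next I would extract finitely many homogeneous generators of $S_+$. As $S$ is Noetherian---say left Noetherian, the other case being symmetric---the left ideal $S_+$ is finitely generated. Because $S_+$ is homogeneous, replacing each generator by its homogeneous components produces a finite generating set $g_1,\dots,g_n$ consisting of homogeneous elements, with $\deg g_j = d_j \ge 1$.

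The core of the argument is then a degree induction showing that $S$ is generated as an $F$-algebra by the subalgebra $T$ generated by $e_1,\dots,e_k,g_1,\dots,g_n$. For degree $0$ we have $S_0 \subseteq T$ by construction. For $m \ge 1$, any homogeneous $x \in S_m$ lies in $S_+$, so it can be written as $x = \sum_j a_j g_j$ with $a_j \in S$; projecting onto the degree-$m$ graded component, I may assume each $a_j$ is homogeneous of degree $m - d_j$. Since $d_j \ge 1$, we have $m - d_j < m$, so $a_j \in T$ by the induction hypothesis, whence $x = \sum_j a_j g_j \in T$. Thus every homogeneous component lies in $T$, so $S = T$ is generated by the finite set $\{e_1,\dots,e_k,g_1,\dots,g_n\}$ and is therefore affine.

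The only point that requires care is ensuring that the generators of $S_+$ can be taken homogeneous and that each coefficient $a_j$ may be chosen homogeneous of strictly smaller degree; this is exactly what makes the induction well-founded and forces it to terminate. Everything else is routine, and, as noted, the finite-dimensionality hypothesis enters solely to guarantee that the base ring $S_0$ is affine.
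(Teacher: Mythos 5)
Your proof is correct and takes essentially the same approach as the paper: both use Noetherianity to finitely generate the augmentation ideal $S_+$ as a left ideal and then run an induction on degree, projecting onto the graded component to make the coefficients homogeneous of strictly smaller degree. The only difference is cosmetic --- you choose finitely many homogeneous generators $g_1,\dots,g_n$ where the paper uses $S_1+\cdots+S_k$, which lets you note that finite-dimensionality is needed only for $S_0$, a slight sharpening rather than a different argument.
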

\begin{proof}
Let $R=R_0+R_1+\cdots$ be a graded Noetherian algebra with $\dim_F R_i<\infty$. Suppose that $I=R_1+R_2+\cdots$ is generated as a left ideal by $R_1+\cdots+R_k$ (this is true for some $k$ by Noetherianity). We claim that each $R_m \subseteq F\left<R_0\cup\cdots\cup R_k\right>$. Work by induction. Pick $m>k$ and let $\alpha\in R_m$. Then $\alpha\in R\cdot \left(R_1+\cdots+R_k\right)$ so we can write: $$\alpha=\sum_{i_1} a_{1,i_1}\cdot r_{1,i_1}+\cdots+\sum_{i_k} a_{k,i_k}\cdot r_{k,i_k}$$ with each $a_{l,i_l}\in R$ homogeneous and $r_{l,i_l}\in R_l$. Since $\deg(\alpha)=m$, we may assume that all $a_{l,i_l}\in R_0\cup\cdots \cup R_{m-1}$, and the claim follows by the induction hypothesis.
\end{proof}


\begin{proof}[{Proof of Theorem \ref{extended}}]
Consider $T=F+S_1+S_2+\cdots\subseteq S$, a connected graded subalgebra. Obviously $S_0$ is a finite-dimensional subalgebra of $S$, so $T$ is a finite-codimensional subalgebra of $S$. By \cite[Lemma~1.4]{Stafford}, it follows that $T$ is Noetherian as well.
By Lemma \ref{gr_aff}, the algebra $T$ is also affine, say, generated by $t_1,\dots,t_n$. As a connected graded affine Noetherian algebra, by Theorem \ref{Lewin's} it follows that $T$ is finitely presented. Since $S$ is a finitely generated $T$-module, it is a homomorphic image of an algebra of the form $T'=T\left<e_1,\dots,e_p\right>/I$, where $I$ is the ideal generated by (finitely many) relations expressing each $e_ie_j$ and $e_it_j$ as a $T$-linear combination of $e_1,\dots,e_p$ (with coefficients given as polynomials in $t_1,\dots,t_n$). Then $T'$ is a finitely presented algebra (recalling that $T$ is) and a finitely generated $T$-module, hence Noetherian, so every homomorphic image of it -- in particular $S$ -- is finitely presented as well.
\end{proof}

\begin{lem} \label{fp}
Let $\mathcal{F}$ be a finite-dimensional filtration of an algebra $R$. If $\gr_\mathcal{F}(R)$ is finitely presented then so is $R$.
\end{lem}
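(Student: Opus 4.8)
The plan is to transfer finite presentation from the associated graded algebra to the filtered algebra by the standard lifting philosophy: relations in $\gr_{\mathcal{F}}(R)$ lift to relations in $R$ modulo lower-order terms, and Noetherianity (which we get for free from finite presentation of a graded algebra via the earlier results) lets us control the lower-order corrections. First I would fix a finite generating set. Since $\gr_{\mathcal{F}}(R)$ is finitely presented it is in particular finitely generated; by Lemma \ref{gr_aff} applied to $\gr_{\mathcal{F}}(R)$ (which is non-negatively graded with finite-dimensional components, as $\dim_F \mathcal{F}_i<\infty$) it is affine, and I may choose homogeneous generators $\bar{g}_1,\dots,\bar{g}_m$ with $\bar{g}_k$ the image of some $g_k\in\mathcal{F}_{d_k}$ in the component $\mathcal{F}_{d_k}/\mathcal{F}_{d_k-1}$. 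These lifts $g_1,\dots,g_m\in R$ generate $R$ as an $F$-algebra: a standard filtered-degree induction shows that any element of $\mathcal{F}_n$ lies in the subalgebra they generate, matching the leading symbol in $\gr_{\mathcal{F}}(R)$ and then descending in filtration degree.

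Next I would lift the defining relations. Let $\bar{f}_1,\dots,\bar{f}_s\in F\langle X_1,\dots,X_m\rangle$ be a finite set of homogeneous relations generating the kernel of the surjection onto $\gr_{\mathcal{F}}(R)$. For each $\bar{f}_j$, the corresponding element $f_j(g_1,\dots,g_m)\in R$ lies in a strictly lower piece of the filtration than its nominal degree, so it can be rewritten as a noncommutative polynomial expression of smaller filtered degree in the $g_k$; call the resulting relation in $F\langle X_1,\dots,X_m\rangle$ by the name $\tilde{f}_j$ (its top-degree part is $\bar{f}_j$). The claim I want is that these finitely many lifted relations $\tilde{f}_1,\dots,\tilde{f}_s$ generate the entire ideal of relations of $R$. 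Suppose $h\in F\langle X_1,\dots,X_m\rangle$ maps to $0$ in $R$; looking at the top filtered degree of $h(g_1,\dots,g_m)$, its leading symbol is a relation in $\gr_{\mathcal{F}}(R)$, hence lies in the ideal generated by the $\bar{f}_j$, so I can subtract an appropriate combination of the $\tilde{f}_j$ from $h$ to strictly lower the filtered degree of its image, and iterate.

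The main obstacle is that this naive descent argument does not obviously terminate: subtracting combinations of the $\tilde{f}_j$ lowers the filtered degree of the \emph{value} $h(g_1,\dots,g_m)\in R$, but the syntactic length of the correcting expression in $F\langle X\rangle$ need not be bounded, so one must argue that the process stabilizes after finitely many steps rather than producing an infinite descent with uncontrolled tails. This is exactly where Noetherianity must enter. By Theorem \ref{extended}, the finitely presented graded algebra $\gr_{\mathcal{F}}(R)$ is Noetherian, and the well-known Noetherian-lifting principle gives that $R$ is Noetherian too; more to the point, I would phrase the descent as a statement about the ideal $J\triangleleft F\langle X\rangle$ generated by the $\tilde{f}_j$ together with the relation ideal $K$ of $R$, showing $\gr(K/J)=0$ inside a suitable filtered-module framework so that $K=J$. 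Concretely, one filters $F\langle X\rangle$ by total degree in the $X_k$ weighted by the $d_k$, checks that the leading-term ideal of $K$ equals the ideal generated by the $\bar{f}_j$ (this is the content of the relations being a full set of defining relations for $\gr_{\mathcal{F}}(R)$), and invokes the fact that a filtered ideal whose associated graded is finitely generated is itself finitely generated — the filtered-to-graded Gröbner/standard-basis lifting lemma — to conclude that $K$ is generated by the finitely many lifts $\tilde{f}_j$. I expect the bookkeeping in that last lifting lemma, ensuring the correcting combinations stay within the fixed finite generating set, to be the delicate point; everything else is routine filtered-algebra manipulation.
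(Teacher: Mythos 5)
Your plan in fact tracks the paper's own proof quite closely: lift homogeneous generators $a_1,\dots,a_n$ of $\gr_\mathcal{F}(R)$, present $R$ as $F\langle x_1,\dots,x_n\rangle/I$ via the lifts, put the weighted grading $\deg(x_i)=d_i$ with associated filtration $\mathcal{G}$ on the free algebra, identify the graded relation ideal $\Ker(\varphi)$ with the leading-term ideal $\gr_\mathcal{G}(I)$, and deduce finite generation of $I$ from finite generation of $\gr_\mathcal{G}(I)$ --- the paper carries out this last step not by explicit descent but by observing that $J\mapsto \gr_\mathcal{G}(J)$ is order-preserving and injective on chains (\cite[Proposition~6.7]{MR}), writing $I$ as an ascending union of finitely generated ideals, and forcing the chain to stabilize. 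However, your proposal contains one genuinely false step: ``By Theorem \ref{extended}, the finitely presented graded algebra $\gr_{\mathcal{F}}(R)$ is Noetherian.'' Theorem \ref{extended} asserts the opposite implication (graded Noetherian $\Rightarrow$ finitely presented), and its converse fails badly: the free algebra $F\langle x,y\rangle$ with its standard grading is finitely presented (no relations at all) but not Noetherian. Lemma \ref{fp} assumes only that $\gr_\mathcal{F}(R)$ is finitely presented, so a proof resting on Noetherianity of the associated graded algebra does not prove the stated lemma.

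Fortunately the Noetherian detour is unnecessary, because the obstacle you raise is illusory. What strictly decreases in the descent is not ``the filtered degree of the value $h(g_1,\dots,g_m)$'' --- that value is $0$ throughout, since $h$ is a relation --- but the $\mathcal{G}$-degree of the representative $h$ itself: if $h\in\mathcal{G}_d$ maps to $0$ in $R$, then its top homogeneous component lies in $\Ker(\varphi)$, hence equals a finite sum $\sum_l u_l\bar{f}_{j_l}v_l$ with $u_l,v_l$ homogeneous of complementary degrees, and $h-\sum_l u_l\tilde{f}_{j_l}v_l\in\mathcal{G}_{d-1}$ is again a relation. Since the degree is a non-negative integer, the process terminates after at most $d+1$ steps; in degree $0$ one has $\mathcal{G}_0=A_0$, where $\varphi$ agrees with $\pi$ (as $\mathcal{F}_{-1}=0$), so degree-zero relations already lie in the ideal generated by the degree-zero $\bar{f}_j=\tilde{f}_j$. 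The syntactic length of the correcting combinations is irrelevant to this induction, because each correction has top degree exactly $d$ and cancels the top homogeneous component of $h$. This terminating descent \emph{is} the ``filtered-to-graded lifting lemma'' you invoke in your final paragraph, so that paragraph is correct and self-sufficient once the spurious appeal to Noetherianity is deleted (alternatively, replace it by the paper's chain-stabilization argument via \cite[Proposition~6.7]{MR}). Two smaller points: Lemma \ref{gr_aff} carries a Noetherian hypothesis and is not needed here --- finite presentation gives affineness outright, and homogeneous generators are obtained by splitting a finite generating set into its components; and choosing the finite homogeneous relation set $\bar{f}_1,\dots,\bar{f}_s$ for your particular generating set uses the standard fact, which the paper flags explicitly, that a finitely presented algebra has finitely generated relation ideal with respect to \emph{every} finite generating set.
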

\begin{proof} 
This is proven in \cite{Lorenz} for filtrations arising from powers of a generating subspace; let us generalize the argument. Fix a set of homogeneous generators of $\gr_\mathcal{F}(R)$, say, $a_1,\dots,a_n$ of degrees $d_1\leq\cdots\leq d_n$ respectively. Any system of lifts $\hat{a}_1,\dots,\hat{a}_n \in R$ of $a_1,\dots,a_n$, whose leading terms with respect to $\mathcal{F}$ agree with $a_1,\dots,a_n$, generates $R$. Let $A=F\left<x_1,\dots,x_n\right>$ be the $n$-generated free algebra. We have a presentation: $$ 0\rightarrow I \rightarrow F\left<x_1,\dots,x_n\right> \xrightarrow{\pi} R \rightarrow 0 $$ 
given by $\pi(x_i)=\hat{a}_i$. Now grade $A=\bigoplus_{d=0}^{\infty} A_d$ by $\deg(x_i)=d_i$ and let $\mathcal{G}$ be the corresponding filtration (namely, $\mathcal{G}_{d} = A_0 + \cdots + A_d$).
Notice that $\pi(\mathcal{G}_d)=\mathcal{F}_d$.
Consider the surjective morphism of graded algebras $\varphi \colon A\rightarrow \gr_{\mathcal{F}}(R)$ given by $\varphi(x_i)=a_i$. Now $\Ker(\varphi)$ is a homogeneous ideal of $A$. Let $f(x_1,\dots,x_n)\in \Ker(\varphi) \cap A_d$. Then $f(a_1,\dots,a_n)$ is zero in $\gr_\mathcal{F}(R)$, which means that in $R$ it holds that $f(\hat{a}_1,\dots,\hat{a}_n)\in \mathcal{F}_{d-1}$ (for $d=0$, formally set $\mathcal{F}_{-1}=\mathcal{G}_{-1}=0$); equivalently, $f(x_1,\dots,x_n)\in I+\mathcal{G}_{d-1}$.
It follows that:
$$ \Ker(\varphi) = I\cap \mathcal{G}_0 +
\frac{\left(I+\mathcal{G}_0\right)\cap \mathcal{G}_1}{\mathcal{G}_0}
+
\frac{\left(I+\mathcal{G}_1\right)\cap \mathcal{G}_2}{\mathcal{G}_1} + \cdots =: \gr_\mathcal{G}(I) $$
hence $\gr_\mathcal{F}(R)\cong F\left<x_1,\dots,x_n\right>/\gr_\mathcal{G}(I)$. 
The assignment $J\mapsto \gr_\mathcal{G}(J)$, carrying ideals of the free algebra to graded ideals is order-preserving and injective on chains (see \cite[Proposition~6.7]{MR}). Write $I=\bigcup_{i=1}^{\infty} I_i$ as an ascending union of finitely generated ideals. Then $\gr_\mathcal{G}(I)=\bigcup_{i=1}^{\infty} \gr_\mathcal{G}(I_i)$, but since $\gr_\mathcal{G}(I)$ 
is finitely generated (recall that finite presentation holds for any finite set of generators), both chains must stabilize. Hence $I$ is finitely generated as well, and $R$ is finitely presented.
\end{proof}

\begin{proof}[{Proof of Theorem \ref{noeth_fp}}]
Let $\mathcal{F}$ be a finite-dimensional filtration of $R$. Write $S:=\gr_\mathcal{F}(R)=S_0+S_1+\cdots$ and assume that it is Noetherian. By Theorem \ref{extended}, $S$ is finitely presented. By Lemma \ref{fp}, $R$ is finitely presented.
\end{proof}

Since the Resco-Small algebra is not finitely presented, it now evidently follows that it has no finite-dimensional filtrations whose associated graded algebras are Noetherian, thereby proving Corollary \ref{rescosmall}.


\section{A continuum of Noetherian algebras}

\begin{proof}[{Proof of Theorem \ref{main_1}}]
Fix a prime number $p$. Let $K=\mathbb{F}_p(t_0,t_1,\dots)$ be the field of rational functions over $\mathbb{F}_p$ in countably many algebraically independent variables. For each sequence $\lambda=(\lambda_0,\lambda_1,\dots)\in \mathbb{F}_p^{\infty}$, set a derivation $\delta=\delta_\lambda\colon K\rightarrow K$ by:
$$ \delta_\lambda(t_k)=t_{k+1}+\lambda_k t_0\ \ \text{for}\ k=0,1,\dots $$
This definition induces a well-defined derivation (e.g.~see \cite[Page 287, Exercise 5]{Rowen}).
Consider the associated differential polynomial ring $R=R_\lambda=K[x;\delta]$. Observe that $F=K^p\subseteq \Ker(\delta)$, so $F=\mathbb{F}_p(t_0^p,t_1^p,\dots)$ is a central subfield of $R$. Obviously $F$ is countable. Notice that $R$ is $F$-affine, and in fact $R=F\left<t_0,x\right>$. Indeed, assume by induction that $t_0,\dots,t_k\in F\left<t_0,x\right>$ for some $k\geq 0$; then: $$t_{k+1}=xt_k-t_kx-\lambda_kt_0\in F\left<t_0,x\right>.$$ Moreover, observe that $K$ is spanned over $F$ by the monomials:
$$ \{ t_{i_1}^{d_1}\cdots t_{i_r}^{d_r}\ |\ r\geq 1,\ i_1<\cdots<i_r,\ 0\leq d_1,\dots,d_r\leq p-1 \} $$
as an $F$-vector space. Hence $R=F\left<t_0,x\right>$. Since $K$ is a field, it immediately follows that $R$ is a Noetherian ring by \cite[Theorem~2.6]{GoodearlWarfield}. Hence for each sequence $\lambda\in \mathbb{F}_p^{\infty}$ we obtain an affine Noetherian $F$-algebra $R_\lambda$ as described above.

Fix $\lambda \in \mathbb{F}_p^{\infty}$. For any $f\in R_{\lambda}$ such that $f^p=t_k^p$, write $f=\sum_{i=0}^{m} c_ix^i$ with all $c_i\in K$. If $c_m\neq 0$ and $m>0$, it is easy to see that the highest non-zero term in $f^p$ is $c_m^px^{mp}$, so $m=0$ and $f$ lies in $K$. It is clear that the only $p$-th root of $t_k^p$ in $K$ is $t_k$ itself.

Fix $\lambda,\lambda' \in \mathbb{F}_p^{\infty}$. Let $\phi\colon R_\lambda \xrightarrow{\sim} R_{\lambda'}$ be an $F$-algebra isomorphism; in particular, $\phi(t_k^p)=t_k^p$. By the above argument, $\phi$ fixes $K$. Write $\phi(x)=\sum_{i=0}^{m} c_ix^i$ with all $c_i\in K$ and $c_m\neq 0$. Then: $$ K[x;\delta_{\lambda'}]=K+K\phi(x)+K\phi(x)^2+\cdots $$ so we can write $x=\sum_{j=0}^{r} d_j \phi(x)^j$ with all $d_j\in K$ and $d_r\neq 0$. The highest non-zero term of the right hand side of the last equation is then $d_rc_m^rx^{mr}$, so $mr=1$ and $m=1$. Now $\phi(x)=c_0+c_1x$ where $c_1\neq 0$. Then: $$ [\phi(x),t_k]=\phi(t_{k+1}+\lambda_k t_0)=t_{k+1}+\lambda_k t_0 $$
but: $$[\phi(x),t_k]=[c_1x,t_k]=c_1xt_k-t_kc_1x=c_1\delta_{\lambda'}(t_k)=c_1t_{k+1}+c_1\lambda'_kt_0$$
Since $\lambda'_k\in \mathbb{F}_p$, we always have that $t_{k+1}+\lambda'_kt_0\neq 0$, so:
$$ c_1 = \frac{t_{k+1}+\lambda_k t_0}{t_{k+1}+\lambda'_k t_0} $$
Take $k\gg 1$ such that $c_1\in \mathbb{F}_p(t_0,\dots,t_k)$, then: $$ 0 = \frac{\partial c_1}{\partial t_{k+1}} = \frac{\partial}{\partial t_{k+1}}\left(\frac{t_{k+1}+\lambda_k t_0}{t_{k+1}+\lambda'_k t_0}\right) = \frac{t_0(\lambda'_k-\lambda_k)}{(t_{k+1}+\lambda'_kt_0)^2} $$
hence $\lambda_k=\lambda'_k$.

Identify two sequences $\lambda \sim \lambda'$ if they agree for almost all slots. Since $|\mathbb{F}_p^{\infty}/\sim~|=2^{\aleph_0}$, the claim follows.
\end{proof}

\begin{proof}[{Proof of Corollary \ref{almost}}] 
This is straightforward by Theorem \ref{main_1} and Lemma \ref{fp}, as over a countable field there are only countably many isomorphism classes of finitely presented algebras.
\end{proof}

\section*{Acknowledgements} We thank Lance Small for inspiring related discussions.

\end{document}